\newtheorem{thm}{Theorem}[section]
\newtheorem{claim}[thm]{Claim}
\newtheorem{cor}[thm]{Corollary}
\newtheorem{lem}[thm]{Lemma}
\newtheorem{prop}[thm]{Proposition}
\newtheorem{question}[thm]{Question}
\theoremstyle{definition}
\newtheorem{defn}[thm]{Definition}
\newcommand\G{\mathbf{G}}
\newcommand{\Om}{\Omega}
\begin{document}

\title{All you need is $\mathbf{A}_\kappa$}
\author{Nathaniel Bannister}
\address{Carnegie Mellon University}
\begin{abstract}
    We show that the vanishing of higher derived limits of the system $\mathbf{A}_\kappa$ implies the additivity of strong homology on the class of locally compact metric spaces of weight at most $\kappa$, thereby establishing a converse to a theorem of Marde\v{s}i\'c and Prasolov. 
\end{abstract}

\thanks{
The author would like to thank James Cummings for helpful discussions leading to this paper and the anonymous referee for a prompt and thorough report. 
}
\maketitle

\section{Introduction}

Strong homology is a theory originally defined by Marde\v{s}i\'c, and offers a correction to \v{C}ech homology which recovers the exactness axiom. 
While investigating strong homology, Marde\v{s}i\'c and Prasolov defined in \cite{SHINA} the inverse system $\mathbf{A}$ and showed that the additivity of strong homology implies the vanishing of the derived limits of $\mathbf{A}$, and that in the presence of the continuum hypothesis, the first derived limit of $\mathbf{A}$ does not vanish. 
The following year, Dow, Simon, and Vaughan showed in \cite{DSV} that the first derived limit of $\mathbf{A}$ does vanish in the presence of the Proper Forcing Axiom, a set theoretic assumption widely believed to have the consistency strength of a supercompact cardinal. 
Todor\v{c}evi\'c  in \cite{PPT} reduced the hypothesis to the Open Graph Axiom, a consequence of the Proper Forcing Axiom which has no large cardinal strength. 

Beginning with Bergfalk's investigations in \cite{SHDLST}, the derived limits of $\mathbf{A}$ and its relatives have received renewed addition, and numerous results have shown that for every $1\leq n<\omega$, the (non)vanishing of $\lim^n\mathbf{A}$ has considerable set-theoretic content. Additionally, investigations of the original motivations from strong homology have led to consistent additivity results in the same models where the derived limits of $\mathbf{A}$ vanish. 
A (very noncomprehensive) collection of results includes:
\begin{itemize}
    \item In \cite{SHDLST}, Bergfalk shows that the vanishing of $\lim^2\mathbf{A}$ is independent of the axioms of set theory.
    \item In \cite{SVHDL}, Bergfalk and Lambie-Hanson show that it is consistent relative to a weakly compact cardinal, $\lim^n\mathbf{A}=0$ for every $1\leq n<\omega$.
    \item In \cite{ASH}, Bannister, Bergfalk, and Moore show that in the model produced by Bergfalk and Lambie-Hanson, strong homology is additive and has compact supports on the class of locally compact separable metric spaces. 
    \item In \cite{SVHDLWLC}, Bergfalk, Hru\v{s}\'ak, and Lambie-Hanson remove the large cardinal hypothesis of \cite{SVHDL} to obtain a model where $\lim^n\mathbf{A}=0$ for every $1\leq n<\omega$. 
    \item In \cite{ADLCM}, Bannister shows that in the model produced by Bergfalk, Hru\v{s}\'ak, and Lambie-Hanson, strong homology is additive and has compact supports on the class of locally compact separable metric spaces. 
    \item In \cite{NVHDL}, Veli\v{c}kovi\v{c} and Vignati show that for every $1\leq n<\omega$, the vanishing of $\lim^n\mathbf{A}$ is independent of the axioms of set theory. 
\end{itemize}
Note there is a trend in these results: first, a vanishing result about the derived limits of $\mathbf{A}$ then a result about strong homology being additive and having compact supports in the same model. 
In this paper, we show that this is no accident: the vanishing of derived limits of $\mathbf{A}$ \emph{implies} that strong homology is additive and has compact supports on the class of locally compact separable metric spaces. 
Moreover, a similar result holds for the ``wider'' system $\mathbf{A}_\kappa$.
To be precise, we show:
\begin{thm} \label{main_intro}
    For every cardinal $\kappa$, strong homology is additive and has compact supports on the class of locally compact metric spaces of weight at most $\kappa$ if and only if $\lim^n\mathbf{A}_\kappa=0$ for every $1\leq n<\omega$. 
\end{thm}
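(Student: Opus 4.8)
The plan is to treat the two implications separately. For the forward implication --- which is, up to the passage from $\aleph_0$ to general $\kappa$, the theorem of Marde\v{s}i\'c and Prasolov --- suppose strong homology is additive and has compact supports on locally compact metric spaces of weight at most $\kappa$. For each $1\le n<\omega$ one exhibits a coproduct $Y=\bigsqcup_{i\in I}K_i$ of at most $\kappa$-many compact polyhedra for which the canonical comparison map $\bigoplus_{i\in I}\bar H_\bullet(K_i)\to\bar H_\bullet(Y)$ has, via the strong-homology spectral sequence of a resolution of $Y$, deviation from an isomorphism equal to $\lim^n\mathbf{A}_\kappa$ in an appropriate degree. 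This is the construction of \cite{SHINA} with $\omega^\omega$ replaced throughout by the index poset underlying $\mathbf{A}_\kappa$, and it yields $\lim^n\mathbf{A}_\kappa=0$ for every $n\ge1$ from the hypothesis (in fact additivity alone suffices here).

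For the converse, assume $\lim^n\mathbf{A}_\kappa=0$ for all $1\le n<\omega$. First I would reduce the space: every locally compact metric $X$ of weight at most $\kappa$ is homeomorphic to a topological coproduct $\bigsqcup_{i\in I}X_i$ with $|I|\le\kappa$ and each $X_i$ a $\sigma$-compact, locally compact, metric (hence second countable) space --- the $X_i$ being the classes of the relation ``joinable by a finite chain of relatively compact open sets,'' which are clopen and $\sigma$-compact. Since a compact subset of such a coproduct lies in finitely many summands and strong homology is additive over finite coproducts, it suffices to establish (i) compact supports on every second-countable $\sigma$-compact locally compact metric space and (ii) additivity on coproducts of at most $\kappa$-many such spaces: compact supports for a general $X$ then follows by combining (i) for the summands with (ii) for $X$.

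For both (i) and (ii), given a coproduct $Y=\bigsqcup_{i\in I}X_i$ as above (or a single $X=\bigcup_{j<\omega}K_j$), choose compact exhaustions $X_i=\bigcup_{j<\omega}K_{i,j}$ and polyhedral resolutions of the $K_{i,j}$, and assemble them --- following Marde\v{s}i\'c's construction of resolutions of coproducts --- into an $\mathrm{HPol}$-resolution $\mathbf{Y}=(Y_\lambda)_{\lambda\in\Lambda}$ of $Y$ whose index poset $\Lambda$ is, by virtue of $|I|\le\kappa$ and the separability of the summands, cofinal in a copy of the index poset underlying $\mathbf{A}_\kappa$. This gives the spectral sequence $E_2^{s,t}=\lim^s H_t(\mathbf{Y})\Rightarrow\bar H_{t-s}(Y)$, so it remains to show $\lim^s H_t(\mathbf{Y})=0$ for all $s\ge1$; the sequence then collapses to $\bar H_m(Y)\cong\bigoplus_{i\in I}\bar H_m(X_i)$ and $\bar H_m(X)\cong\varinjlim_j\bar H_m(K_j)$, giving (ii) and (i). Since $H_t$ of a finite polyhedron is finitely generated and $\Lambda$ carries the product structure of the $\mathbf{A}_\kappa$-poset, each $H_t(\mathbf{Y})$ is a finite iterated extension of systems of the shape ``$\mathbf{A}_\kappa$ with abelian-group coefficients,'' which are direct sums and subquotients of shifted copies of $\mathbf{A}_\kappa$. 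I would then invoke a reduction lemma --- parallel to those of \cite{SVHDL,ASH} for $\kappa=\aleph_0$, whose proofs do not see $\kappa$ --- stating that the class of inverse systems over the $\mathbf{A}_\kappa$-poset with vanishing higher derived limits contains $\mathbf{A}_\kappa$ with any abelian coefficient group once it contains $\mathbf{A}_\kappa$, and is closed under direct sums, the relevant level-truncation subquotients, and extensions; this propagates $\lim^n\mathbf{A}_\kappa=0$ to every $H_t(\mathbf{Y})$.

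I expect the main obstacle to be this last stage. On one hand one must pin down precisely which inverse systems arise as the homology of the coproduct resolution --- in particular verifying that $\Lambda$ really is, cofinally, no larger than the index poset of $\mathbf{A}_\kappa$ for coproducts of up to $\kappa$-many $\sigma$-compact pieces --- and on the other hand one must prove the reduction lemma at this level of generality. Throughout one has to remember that nonvanishing of higher derived limits is not inherited by subsystems, so ``controlled by $\mathbf{A}_\kappa$'' must mean membership in a class generated from $\mathbf{A}_\kappa$ by exact sequences and retracts rather than a mere containment; isolating the right such class and checking that the homology systems of the resolution lie in it is where essentially all the work of the converse is concentrated.
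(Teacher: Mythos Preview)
Your forward direction is right; this is Marde\v{s}i\'c--Prasolov. Your converse, however, takes a substantially different route from the paper, and the place where you defer the work is exactly where the paper's new contribution sits.

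The paper does not touch resolutions or spectral sequences at all. It cites \cite{ADLCM} (Theorem~\ref{ADLCM_thm} here) to reduce both additivity and compact supports to a single algebraic statement: for every $\Omega_\kappa$ system $\mathcal{G}$ with finitely generated groups, the map $\bigoplus_\alpha\lim^n\G_\alpha\to\lim^n\G$ is an isomorphism. It then proves this (Theorem~\ref{main}) by two short-exact-sequence reductions. First, every such $\mathcal{G}$ embeds via $g\mapsto\prod_{i\le k}p_{\alpha,k,i}(g)$ in an ``$\mathbf{A}_\kappa$-like'' system $A^{\mathcal{G}}$ whose groups are $\prod_{i\le k}G_{\alpha,i}$ with projection maps; induction on $n$ and the five lemma reduce to the $\mathbf{A}_\kappa$-like case. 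Second, any $\mathbf{A}_\kappa$-like system is a quotient of an ``essentially $\mathbf{A}_\kappa$'' system (each block $H_{\alpha,k}$ free and nonzero) by an essentially $\mathbf{A}_\kappa$ kernel. The decisive observation (Proposition~\ref{ess_A_prop}) is that an essentially $\mathbf{A}_\kappa$ system is isomorphic to $\mathbf{A}_\kappa$ itself after restriction to a cofinal suborder, via the rank reindexing $x\mapsto(\alpha\mapsto\operatorname{rk}G_{\alpha,x(\alpha)})$; hence its derived limits coincide with those of $\mathbf{A}_\kappa$ and vanish by hypothesis.

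Your proposal instead rebuilds the topology-to-algebra reduction of \cite{ADLCM} from scratch and then invokes a ``reduction lemma'' that you do not prove. Two concrete problems: (a) your lemma as stated --- vanishing for $\mathbf{A}_\kappa$ implies vanishing for $\mathbf{A}_\kappa$ with \emph{any} abelian coefficient group --- is open (it is Question~\ref{A0_implies_AH0_quest} of the paper); only finitely generated coefficients are needed here, but that restriction and its proof must be supplied. (b) The homology systems $H_t(\mathbf{Y})$ of a coproduct resolution are general $\Omega_\kappa$ systems with finitely generated groups and \emph{arbitrary} bonding maps, not manifestly iterated extensions of ``$\mathbf{A}_\kappa$ with coefficients''; showing they can be so resolved \emph{is} Theorem~\ref{main}. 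The arguments in \cite{SVHDL,ASH} you appeal to were carried out in specific forcing extensions; extracting from them an argument that runs from $\lim^n\mathbf{A}_\kappa=0$ alone is precisely what this paper contributes, and its mechanism --- the rank-reindexing trick of Proposition~\ref{ess_A_prop} --- is absent from your sketch.
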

Theorem \ref{main_intro} completes a long circle of implications from \cite{ASH}, \cite{ADLCM}, rendering them equivalences, which we summarize as Theorem \ref{circle}. Already appearing in \cite{ASH} is a proof that items (\ref{cs_item}) through (\ref{add_lim_item}) are equivalent in the special case $\kappa=\omega$ and then generalized to arbitrary $\kappa$ in \cite{ADLCM}; see Theorem \ref{ADLCM_thm} below. 
Bergfalk isolated the notion of an $n$-coherent family of functions in \cite{SHDLST} and showed that (\ref{limA_item}) and (\ref{coh_item}) are equivalent. 
The implications (\ref{add_lim_item}) implies (\ref{om_coh_item}) and (\ref{om_coh_item}) implies (\ref{coh_item}) are immediate. 
We will show in Section \ref{proof_section} that (\ref{limA_item}) implies (\ref{add_lim_item}), thereby completing the proof of Theorem \ref{circle}. 
We define $\Om_\kappa$ systems below; see \cite{ASH} for a definition of their corresponding coherent families and the definition of type $II$ triviality. 
We note that the implication (\ref{om_coh_item}) implies (\ref{additivity_item}) answers a question posed in \cite[Remark 2]{ASH}.

\begin{thm} \label{circle}
    The following are equivalent:
    \begin{enumerate}
        \item $\lim^n\mathbf{A}_\kappa=0$ for every $1\leq n<\omega$. \label{limA_item}
        \item For every $1\leq n<\omega$, every $n$-coherent family of functions indexed by $\omega^\kappa$ is trivial. \label{coh_item}
        \item Whenever $X$ is a locally compact metric space of weight at most $\kappa$, 
\[\overline{H}_n(X)\cong\underset{\substack{K\subseteq X\\K\text{ compact}}}{\operatorname{colim}}\overline{H}_n(K),\]
where $\overline{H}_n$ is strong homology (recall that the weight of a topological space is the minimum cardinality of a basis).
That is, strong homology has \emph{compact supports} on the class of locally compact separable metric spaces. 
\label{cs_item}
        \item Whenever $\langle X_i\mid i<\kappa\rangle$ are locally compact metric spaces of weight at most $\kappa$, the natural map
\[\bigoplus_{i<\kappa}\overline{H}_n(X_i)\to\overline{H}_n\left(\coprod_{i<\kappa }X_i\right)\]
is an isomorphism; that is, strong homology is \emph{additive} on the class of locally compact metric spaces of weight at most $\kappa$. 
\label{additivity_item}
        
        \item Whenever $\langle X_i\mid i<\kappa\rangle$ are compact metric and $p\geq0$, the canonical map
        \[\bigoplus_{i<\kappa}\overline H_n(X_i)\to\overline{H}_n\left(\coprod_{i<\kappa}X_i\right)\]
        is an isomorphism. 
        \item Whenever $\mathcal{G}$ is an $\mathit{\Om_\kappa}$ system with each group $G_{\alpha,k}$ finitely generated and $n\geq0$, the canonical map
        \[\bigoplus_{\alpha<\kappa}\lim\hspace{-.1em}^n\,\G_\alpha\to\lim\hspace{-.1em}^n\, \G\]
        is an isomorphism. \label{add_lim_item}
        \item Whenever $\mathcal{G}$ is an $\mathit{\Om_\kappa}$ system with each $G_{\alpha,k}$ finitely generated and $n\geq1$, every $n$-coherent family corresponding to $\mathcal{G}$ is type $II$ trivial. \label{om_coh_item}
        
    \end{enumerate}
\end{thm}
\section{Preliminaries}

\begin{defn} \label{Om_kappa sys}
Suppose $\kappa$ is a cardinal. 
An \emph{$\Omega_{\kappa}$ system} $\mathcal{G}$ is specified by an indexed collection $\{G_{\alpha,k} \mid \alpha<\kappa, k \in \omega\}$
of abelian groups along with, for $\alpha<\kappa$ and $j\geq k$, compatible homomorphisms
$p_{\alpha,j,k}: G_{\alpha,j} \to G_{\alpha,k}$.

Such data give rise to the following additional objects:
\begin{itemize}

\item 
For each $x \in \omega^\kappa$ define 
${\displaystyle G_x := \bigoplus_{\alpha<\kappa} G_{\alpha,x(\alpha)}}$

\item
For each $x \leq y \in \omega^\kappa$ a homomorphism
$p_{y,x}:G_y \to G_x$
defined by $p_{y,x} := \bigoplus_{\alpha<\kappa} p_{\alpha,y(\alpha),x(\alpha)}$.

\item 
The systems $\mathbf{G}$ indexed over $\omega^\kappa$ with structure given by the above points.

\item 
For each $\alpha<\kappa$ an inverse system $\G_\alpha$ indexed over $\omega$ with $(\G_\alpha)_k=G_{\alpha,k}$ and structure maps given by $p_{\alpha,j,k}$. 
We will often abbreviate $p_{\alpha,k+1,k}$ as $p_{\alpha,k}$. 
We denote the canonical map from $\lim \G_\alpha$ to $G_{\alpha,k}$ as $p_{\alpha,\omega,k}$.

\end{itemize}
\end{defn}
For each $\alpha<\kappa$, the map from $\omega^\kappa$ to $\omega$ given by evaluation at $\alpha$ induces a functor from inverse systems indexed by $\omega$ to those indexed by $\omega^\kappa$. 
This functor commutes with $\lim$ and preserves both exact sequences and injective objects and therefore preserves derived limits; see \cite[Theorem 14.9]{SSH}. 
The canonical inclusion from the pulled back version of $\G_\alpha$ to $\G$ induces a map of derived limits from $\lim^n\G_\alpha$ to $\G$ so that there is a canonical map from $\bigoplus_{\alpha<\kappa}\lim\hspace{-.1em}^n\,\G_\alpha$ to $\lim\hspace{-.1em}^n\,\mathbf{G}$. 
A question bearing directly on strong homology computations is whether this map is always an isomorphism, as the following theorem indicates:
\begin{thm}[B. {\cite[Theorem 1.3]{ADLCM}}] \label{ADLCM_thm}
    The following are equivalent:
    \begin{enumerate}
\item Strong homology has compact supports on the class of locally compact metric spaces of weight at most $\kappa$. \label{cs}
    \item Strong homology is additive on the class of locally compact metric spaces of weight at most $\kappa$. 
\item Whenever $\langle X_i\mid i<\kappa\rangle$ are compact metric spaces, the natural map 
\[\bigoplus_{i<\kappa}\overline{H}_n(X_i)\to\overline{H}_n\left(\coprod_{i<\kappa }X_i\right)\]
is an isomorphism. \label{add_c}
        \item Whenever $\mathcal{G}$ is an $\Om_{\kappa}$ system with all groups finitely generated, the canonical map 
\[\bigoplus_{\alpha<\kappa}\lim\hspace{-.1em}^n\,\G_\alpha\to \lim\hspace{-.1em}^n\,G\]
is an isomorphism. \label{add_om}
    \end{enumerate}
\end{thm}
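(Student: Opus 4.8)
The plan is to set up a dictionary between topological disjoint sums of compact metric spaces and $\Omega_\kappa$ systems with finitely generated groups, and then read the four equivalences off it; the genuinely new content is the equivalence of~(\ref{add_c}) and~(\ref{add_om}), the equivalence of these with~(\ref{cs}) and with additivity on all locally compact metric spaces of weight $\leq\kappa$ being a routine reduction (using finite additivity of strong homology, the fact that a compact subset of a topological sum meets only finitely many clopen summands, and a mapping-telescope argument for the $\sigma$-compact case). Recall that every compact metric space is an inverse limit $X=\varprojlim_k P_k$ along a cofinal sequence of finite open covers, that this sequence is a resolution of $X$, and hence that $0\to\lim{}^1 H_{p+1}(P_k)\to\overline H_p(X)\to\lim H_p(P_k)\to 0$ is exact. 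Given a family $\langle X_\alpha\mid\alpha<\kappa\rangle$ of compacta, the summands are clopen in $\coprod_\alpha X_\alpha$, so its cofinal system of open covers is indexed by $\omega^\kappa$ under coordinatewise refinement, the cover indexed by $x$ having nerve $\coprod_\alpha P^\alpha_{x(\alpha)}$; writing $\mathcal G^{(t)}$ for the $\Omega_\kappa$ system with groups $H_t(P^\alpha_k)$ (finitely generated, since the $P^\alpha_k$ are finite polyhedra), with associated $\omega^\kappa$-system $\G^{(t)}$ and towers $\G^{(t)}_\alpha$, the degree-$t$ homology of this nerve is $\bigoplus_\alpha H_t(P^\alpha_{x(\alpha)})=G^{(t)}_x$. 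The resolution therefore yields a spectral sequence $E_2^{s,t}=\lim{}^s\G^{(t)}\Rightarrow\overline H_{t-s}(\coprod_\alpha X_\alpha)$ whose edge homomorphisms are natural in the resolution, so that the inclusions of the summands identify the canonical comparison map $\bigoplus_\alpha\overline H_*(X_\alpha)\to\overline H_*(\coprod_\alpha X_\alpha)$ on abutments with the canonical maps $\bigoplus_\alpha\lim{}^s\G^{(t)}_\alpha\to\lim{}^s\G^{(t)}$ on $E_2$.

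Granting this dictionary, suppose first that~(\ref{add_om}) holds. Each $\G^{(t)}_\alpha$ is an inverse sequence over $\omega$, so $\lim{}^s\G^{(t)}_\alpha=0$ for $s\geq 2$; thus $E_2^{s,t}=\lim{}^s\G^{(t)}$, which by~(\ref{add_om}) equals $\bigoplus_\alpha\lim{}^s\G^{(t)}_\alpha$, vanishes for $s\geq 2$, and the spectral sequence collapses to the short exact sequences $0\to\bigoplus_\alpha\lim{}^1\G^{(p+1)}_\alpha\to\overline H_p(\coprod_\alpha X_\alpha)\to\bigoplus_\alpha\lim\G^{(p)}_\alpha\to 0$. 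Comparing these, through the canonical map, with the direct sum of the Milnor sequences of the $X_\alpha$ (exact, since direct sums are exact), the five lemma shows $\bigoplus_\alpha\overline H_p(X_\alpha)\to\overline H_p(\coprod_\alpha X_\alpha)$ is an isomorphism, which is~(\ref{add_c}).

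Conversely, suppose~(\ref{add_c}) holds; fix an $\Omega_\kappa$ system $\mathcal G$ with finitely generated groups, with associated $\omega^\kappa$-system $\G$ and towers $\G_\alpha$, and fix some $n\geq 0$; pick $p\geq\max(2,n+1)$. Using the realizability of inverse sequences of finitely generated abelian groups by finite polyhedra, choose for each $\alpha$ an inverse sequence $\langle P^\alpha_k\rangle_k$ of finite simply connected polyhedra with reduced homology concentrated in degree $p$ and with $(H_p(P^\alpha_k))_k$, together with its bonding maps, isomorphic to $\G_\alpha$ --- possible because for $p\geq 2$ a map of Moore spaces $M(A,p)\to M(B,p)$ realizes every homomorphism $A\to B$ on $H_p$ --- and put $X_\alpha:=\varprojlim_k P^\alpha_k$. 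In the spectral sequence for $\coprod_\alpha X_\alpha$ the only nonzero rows are $t=p$, where $E_2^{s,p}=\lim{}^s\G$, and $t=0$, where the relevant system is the constant $\ZZ^{(\kappa)}$; since $\omega^\kappa$ has a least element its order complex is a cone, so constant systems over it have no higher derived limits, the $t=0$ row contributes only $E_2^{0,0}$, and hence for $1\leq s\leq p-1$ no differential touches $E_2^{s,p}$. This yields $\overline H_{p-n}(\coprod_\alpha X_\alpha)\cong\lim{}^n\G$, while $\overline H_{p-n}(X_\alpha)\cong\lim{}^n\G_\alpha$ since $\G_\alpha$ is a tower. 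So~(\ref{add_c}) in degree $p-n$ asserts exactly that $\bigoplus_\alpha\lim{}^n\G_\alpha\to\lim{}^n\G$ is an isomorphism; letting $\mathcal G$ and $n$ vary yields~(\ref{add_om}), with the case $n\geq 2$ reading $\lim{}^n\G=0$.

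The main obstacle is the resolution-and-spectral-sequence bookkeeping underlying the first paragraph: one must verify that the disjoint sum of resolutions of the $X_\alpha$ is a resolution of $\coprod_\alpha X_\alpha$ over $\omega^\kappa$ and supports the claimed spectral sequence (whose convergence, only conditional in general, causes no trouble in either application, where at most two columns --- respectively, at most two rows --- of $E_2$ are nonzero), and, most delicately, that the edge homomorphisms of that spectral sequence really coincide with the canonical comparison maps on both the topological and the algebraic side, so that the diagram chases above compare the intended maps. The realization step is by contrast elementary, since only the individual bonding homomorphisms need be realized and functoriality of $H_p$ then takes care of compositions. Once the dictionary is in place, each implication is a short degeneration-plus-five-lemma argument.
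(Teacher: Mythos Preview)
The paper does not contain a proof of this theorem: it is quoted verbatim as \cite[Theorem 1.3]{ADLCM} and used as a black box, so there is no in-paper argument to compare your sketch against. Your outline is a plausible reconstruction of how such a result is typically proved---via a resolution of $\coprod_\alpha X_\alpha$ indexed by $\omega^\kappa$, the associated strong homology spectral sequence, and Moore-space realization in the reverse direction---and indeed the cited paper \cite{ADLCM} proceeds along broadly these lines, but that comparison lies outside the present paper. If you want feedback on the sketch itself: the degeneration and five-lemma steps are fine, and your handling of the $t=0$ row (constant system over a directed poset with least element, hence vanishing higher $\lim^s$) is correct; the genuinely delicate point you flag---that the comparison map on abutments really is induced by the canonical map $\bigoplus_\alpha\lim^s\G_\alpha^{(t)}\to\lim^s\G^{(t)}$ on $E_2$---is exactly where the work lies, and your sketch defers rather than resolves it.
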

One $\Om_\kappa$ system of particular importance is the system $\mathcal{A}_\kappa$ where $(\mathcal{A}_\kappa)_{\alpha,k}=\mathbb{Z}^k$ with the canonical projection maps. 
We note that for every $\alpha<\kappa$ and $1\leq n<\omega$,  $\lim\hspace{-.1em}^n\,(\mathbf{A}_\kappa)_\alpha=0$ so that the additivity of derived limits for $\mathcal{A}_\kappa$ is equivalent to the vanishing of derived limits of $\mathbf{A}_\kappa$. 
In turn, the vanishing of derived limits of $\mathbf{A}_\kappa$ has a nice set-theoretic characterization in terms of coherent families of functions being trivial, though we will not need this characterization. 
See \cite[Theorem 3.3]{SHDLST} for a statement and proof. 

We will see that the vanishing of all higher derived limits of the corresponding system $\mathbf{A}_\kappa$ holds implications for the additivity of derived limits for all $\Om_\kappa$ systems and therefore by Theorem \ref{ADLCM_thm} for the additivity of strong homology. 
\section{The proof} \label{proof_section}
This section consists of a proof of the following theorem to complete the circle of implications:
\begin{thm} \label{main}
    Suppose that $\lim^s\mathbf{A}_\kappa=0$ for all $1\leq s\leq n+1$. 
    Then whenever $\mathcal{G}$ is an $\Om_\kappa$ system with each $G_{\alpha,k}$ finitely generated, the canonical map 
    \[\bigoplus_{\alpha<\kappa}\lim\hspace{-.1em}^n\,\G_\alpha\to\lim\hspace{-.1em}^n\,\mathbf{G}\]
    is an isomorphism. 
\end{thm}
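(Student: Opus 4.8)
The plan is to strip an arbitrary finitely generated $\Omega_\kappa$ system down to $\mathbf{A}_\kappa$ by two successive dévissages, using throughout that $\bigoplus$ is exact, that derived limits of an inverse system indexed by a directed poset are unchanged under restriction to a cofinal subsystem (cf.\ \cite{SSH}), and that the canonical additivity map is natural with respect to short exact sequences of $\Omega_\kappa$ systems, so that the two long exact sequences it connects --- the one in the $\bigoplus_{\alpha}\lim^{\ast}(-)_\alpha$ groups and the one in $\lim^{\ast}$ of the associated $\omega^\kappa$-indexed systems --- form a ladder to which one applies the five lemma. Degree $0$ is unconditional: a thread in $\mathbf{G}$ is, in each coordinate $\alpha$, a thread in $\G_\alpha$, and if infinitely many of these coordinate threads were nonzero one could pick $x\in\omega^\kappa$ at which all of them are visibly nonzero, contradicting the finite support of elements of $G_x$; hence $\lim\mathbf{G}=\bigoplus_\alpha\lim\G_\alpha$. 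So fix $n\geq 1$.

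\emph{First dévissage: reduce to levelwise free systems.} Constructing, level by level via fibre products and lifts of generating sets (so that everything stays finitely generated, $\mathbb{Z}$ being Noetherian), a surjection onto each $\G_\alpha$ from a tower of finitely generated free abelian groups, one obtains a short exact sequence of $\Omega_\kappa$ systems $0\to\mathcal{K}\to\mathcal{F}\to\mathcal{G}\to 0$ with $\mathcal{F}$ and $\mathcal{K}$ levelwise finitely generated free (subgroups of finitely generated free abelian groups being such). The five lemma applied to the associated ladder reduces additivity for $\mathcal{G}$ in degree $n$ to additivity for $\mathcal{F}$ and $\mathcal{K}$ in degrees $n$ and $n+1$; this step is the only source of the ``$+1$'' in the hypothesis.

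\emph{Second dévissage: reduce levelwise free systems to $\mathbf{A}_\kappa$.} Given levelwise finitely generated free $\mathcal{F}$, form the telescoping resolution with $\alpha$-fibre $F_{\alpha,0}\oplus\cdots\oplus F_{\alpha,k}$ at level $k$ and the evident projections, into which $\mathbf{F}_\alpha$ embeds fibrewise by $f\mapsto (p_{\alpha,k,j}f)_{j\leq k}$; this is natural in $\alpha$, so it gives a short exact sequence of $\Omega_\kappa$ systems $0\to\mathcal{F}\to\mathcal{P}\to\mathcal{Q}\to 0$ in which $\mathcal{P}$ and $\mathcal{Q}$ are levelwise finitely generated free and have \emph{surjective} structure maps. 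It then suffices to prove that $\lim^{s}$ of any such ``Mittag--Leffler free'' $\Omega_\kappa$ system vanishes for $1\leq s\leq n+1$: the fibre towers of such a system, being Mittag--Leffler, already have vanishing higher derived limits, so the additivity maps for $\mathcal{P}$ and $\mathcal{Q}$ are trivially isomorphisms in positive degrees, and by the degree-$0$ case also in degree $0$; feeding this and the claimed vanishing into the ladder for $0\to\mathcal{F}\to\mathcal{P}\to\mathcal{Q}\to 0$ and dimension-shifting then yields additivity for $\mathcal{F}$ in all degrees $\leq n+1$, which is exactly what the first dévissage requires. To prove the vanishing: a surjective structure map between finitely generated free abelian groups splits, so each fibre of a Mittag--Leffler free $\Omega_\kappa$ system is, as a tower, a ``telescope'' $(\mathbb{Z}^{s_{\alpha,k}})_k$ with projections and $s_{\alpha,k}$ nondecreasing in $k$; partition $\kappa=\kappa_0\sqcup\kappa_1$ according to whether $s_{\alpha,\bullet}$ is unbounded or eventually constant, with the corresponding decomposition $\mathcal{P}=\mathcal{P}^0\oplus\mathcal{P}^1$ (and $\omega^\kappa=\omega^{\kappa_0}\times\omega^{\kappa_1}$). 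Over $\kappa_0$, deleting in each fibre the levels at which the rank fails to increase is a cofinal reindexing, and these assemble into a cofinal monotone self-map of $\omega^{\kappa_0}$ exhibiting $\mathbf{P}^0$ as a pullback of $\mathbf{A}_{\kappa_0}$; hence $\lim^{s}\mathbf{P}^0\cong\lim^{s}\mathbf{A}_{\kappa_0}$, a direct summand of $\lim^{s}\mathbf{A}_\kappa=0$ for $1\leq s\leq n+1$. Over $\kappa_1$, a cofinal reindexing makes $\mathbf{P}^1$ a constant system, whose $\lim^{s}$ is a cohomology group of the nerve of $\omega^{\kappa_1}$; since that nerve is weakly contractible (the poset being directed), these vanish in positive degrees. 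Adding along the cofinal projections $\omega^\kappa\to\omega^{\kappa_i}$ finishes the proof of the vanishing.

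The step I expect to be the main obstacle is the second dévissage: one must verify that the telescoping resolution and, more delicately, the recognition of $\mathbf{P}^0$ as a pullback of $\mathbf{A}_{\kappa_0}$ are honest constructions on $\Omega_\kappa$ systems --- that the per-fibre cofinal reindexings genuinely fit together into a single cofinal monotone map of $\omega^{\kappa_0}$ --- all while keeping the homological degrees aligned, so that $\lim^{s}\mathbf{A}_\kappa=0$ for $s\leq n+1$ is precisely what is needed.
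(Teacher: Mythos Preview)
Your proposal is correct and lands on exactly the same hypothesis bound, but the two d\'evissages are performed in the opposite order from the paper. The paper first embeds an arbitrary finitely generated $\Omega_\kappa$ system $\mathcal{G}$ into its telescope $\mathbf{A}^{\mathcal{G}}$ (your $\mathcal{P}$) and argues by induction on $n$, since the cokernel $\mathbf{A}^{\mathcal{G}}/\mathbf{G}$ is again only a general finitely generated system; only afterwards does it pass to free data, and it does so at the level of the building blocks $H_{\alpha,k}$ rather than the $G_{\alpha,k}$: choosing each $\psi_{\alpha,k}\colon F_{\alpha,k}\twoheadrightarrow H_{\alpha,k}$ with $F_{\alpha,k}$ free \emph{nonzero} and $\ker\psi_{\alpha,k}$ \emph{nonzero} forces both the resolving system and its kernel to be what the paper calls \emph{essentially $\mathbf{A}_\kappa$} (strictly increasing ranks in every fibre), so a single cofinal reindexing identifies each with $\mathbf{A}_\kappa$ and no $\kappa_0/\kappa_1$ split is needed. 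Your order---free-resolve first, then telescope---buys you that both $\mathcal{P}$ and $\mathcal{Q}$ are Mittag--Leffler free, so no induction on $n$ is required; the price is that you must separately dispose of the bounded-rank fibres via the constant-system argument. The worry you flag about the per-fibre reindexings assembling is fine: for $\alpha\in\kappa_0$ pick $\sigma_\alpha\colon\omega\to\omega$ strictly increasing with $s_{\alpha,\sigma_\alpha(\cdot)}$ strictly increasing, and $\Sigma(x)(\alpha)=\sigma_\alpha(x(\alpha))$ is the desired cofinal monotone self-map of $\omega^{\kappa_0}$; this is precisely the mechanism behind the paper's Proposition~3.5.
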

Our first reduction is from general $\Om_\kappa$ systems to a more restricted class of \emph{$\mathbf{A}_\kappa$-like} systems. 
\begin{defn}
    An $\Om_\kappa$ system $\mathcal{G}$ is \emph{$\mathbf{A}_\kappa$-like} if there are finitely generated abelian groups $\langle H_{\alpha,k}\mid \alpha<\kappa,k<\omega\rangle$ such that for all $\alpha,k$, $G_{\alpha,k}\cong\prod_{i\leq k}H_{\alpha,k}$ with the maps appearing in $\mathcal{G}$ the canonical projection maps. 

    If $\mathcal{G}$ is any $\Om_\kappa$ system with each $G_{\alpha,k}$ finitely generated, we define the associated $\mathbf{A}_\kappa$-like system $A^\mathcal{G}$ by setting $H_{\alpha,k}=G_{\alpha,k}$. 
    Note that there is a canonical inclusion map $i^\mathcal{G}\colon\mathcal{G}\to A^\mathcal{G}$ given by $i^\mathcal{G}_{\alpha,k}=\prod_{i\leq k}p^\mathcal{G}_{\alpha,k,i}$. 
\end{defn}

Note that $\mathcal{A}_\kappa$ is $\mathbf{A}_\kappa$-like with $H_{\alpha,k}=\mathbb{Z}$ for each $\alpha<\kappa,k<\omega$. We now reduce to $\mathbf{A}_\kappa$-like systems. 
\begin{lem} \label{Ak_like_lem}
    Suppose $n<\omega$ and that $\lim^s\mathbf{H}=0$ whenever $\mathcal{H}$ is $\mathbf{A}$-like and $1\leq s\leq n$. 
    Then whenever $\mathcal{G}$ is an $\Om_\kappa$ system with each $G_{\alpha,k}$ finitely generated, the canonical map \[\bigoplus_{\alpha<\kappa}\lim\hspace{-.1em}^n\,\G_\alpha\to\lim\hspace{-.1em}^n\,\mathbf{G}\]
    is an isomorphism. 
\end{lem}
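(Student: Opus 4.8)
The plan is to argue by induction on $n$, at each stage exploiting the canonical short exact sequence of $\Om_\kappa$ systems
\[0 \longrightarrow \mathcal{G} \xrightarrow{i^{\mathcal G}} A^{\mathcal G} \longrightarrow \mathcal{Q} \longrightarrow 0,\]
where $\mathcal{Q}$ denotes the cokernel of $i^{\mathcal G}$. First I would nail down the formal framework. Since the $i=k$ coordinate of $i^{\mathcal G}_{\alpha,k}$ is the identity, $i^{\mathcal G}$ is a monomorphism, and it commutes with the bonding maps, so $\mathcal{Q}$ is genuinely an $\Om_\kappa$ system, and each $Q_{\alpha,k}$, being a quotient of the finitely generated group $\prod_{i\le k}G_{\alpha,i}$, is finitely generated. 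Write $\Phi^s(\mathcal G) = \bigoplus_{\alpha<\kappa}\lim^s\G_\alpha$ and $\Psi^s(\mathcal G) = \lim^s\mathbf{G}$. Because evaluation at $\alpha$ is exact and commutes with $\lim^s$ (\cite[Theorem 14.9]{SSH}), and $\bigoplus_\alpha$ is exact, both $\Phi^\bullet$ and $\Psi^\bullet$ are cohomological $\delta$-functors on the category of $\Om_\kappa$ systems, and the canonical maps $\theta^s_{\mathcal G}\colon\Phi^s(\mathcal G)\to\Psi^s(\mathcal G)$ — assembled from the inclusions of the pulled-back systems $\G_\alpha$ into $\mathbf G$ — constitute a morphism of $\delta$-functors (natural and compatible with connecting maps). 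The lemma is the assertion that $\theta^n_{\mathcal G}$ is always an isomorphism; if $\kappa$ is finite this is trivial, so assume $\kappa$ infinite.

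Two computations feed the induction. The bonding maps of the fibre $(A^{\mathcal G})_\alpha$ are the surjective projections $\prod_{i\le k+1}G_{\alpha,i}\to\prod_{i\le k}G_{\alpha,i}$, so $(A^{\mathcal G})_\alpha$ is a Mittag-Leffler inverse system over $\omega$; hence $\lim^s(A^{\mathcal G})_\alpha = 0$ for all $s\ge 1$, and therefore $\Phi^s(A^{\mathcal G}) = 0$ for all $s\ge 1$. On the other hand $A^{\mathcal G}$ is $\mathbf{A}_\kappa$-like with the finitely generated groups $H_{\alpha,k} = G_{\alpha,k}$, so the standing hypothesis gives $\Psi^s(A^{\mathcal G}) = 0$ for $1\le s\le n$. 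The base case $n = 0$ is proved directly and with no hypothesis: a compatible family in $\lim\mathbf{G}$ projects coordinatewise to $(h_\alpha)_\alpha\in\prod_\alpha\lim\G_\alpha$; for each $\alpha$ with $h_\alpha\ne 0$ pick $k_\alpha$ with $p_{\alpha,\omega,k_\alpha}(h_\alpha)\ne 0$, and pick $x\in\omega^\kappa$ with $x(\alpha) = k_\alpha$ on the set $\{\alpha: h_\alpha\ne 0\}$; this set is then contained in the (finite) support of the value of the family at $x$, so it is finite, and $(h_\alpha)$ lies in $\bigoplus_\alpha\lim\G_\alpha$. Thus $\theta^0_{\mathcal G}$ is always an isomorphism.

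For the inductive step, fix $n\ge 1$, assume the lemma for $n - 1$, and assume $\lim^s\mathbf{H} = 0$ for all $\mathbf{A}_\kappa$-like $\mathcal{H}$ and $1\le s\le n$; this includes the level-$(n-1)$ hypothesis, so the inductive hypothesis applies to every finitely generated $\Om_\kappa$ system — in particular to $\mathcal{Q}$ — giving that $\theta^{n-1}_{\mathcal Q}$ is an isomorphism. Applying $\theta^\bullet$ to the long exact sequences of the displayed short exact sequence, and using $\Phi^n(A^{\mathcal G}) = 0 = \Psi^n(A^{\mathcal G})$, one obtains a commuting square of surjections presenting $\Phi^n(\mathcal G)$ and $\Psi^n(\mathcal G)$ as the cokernels of $\Phi^{n-1}(A^{\mathcal G})\to\Phi^{n-1}(\mathcal Q)$ and $\Psi^{n-1}(A^{\mathcal G})\to\Psi^{n-1}(\mathcal Q)$, respectively. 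The vertical map $\theta^{n-1}_{\mathcal Q}$ is an isomorphism by the inductive hypothesis, and $\theta^{n-1}_{A^{\mathcal G}}$ is an isomorphism too — for $n\ge 2$ both sides vanish, and for $n = 1$ this is the base case — so the induced map on cokernels, which agrees with $\theta^n_{\mathcal G}$ by naturality, is an isomorphism. The one point that demands real care here, as opposed to routine diagram chasing, is checking that the two long exact sequences and the maps $\theta^s$ between them fit together compatibly (equivalently, that $\theta^\bullet$ really is a morphism of $\delta$-functors), together with the verification that $i^{\mathcal G}$ is a monomorphism of systems whose cokernel again consists of finitely generated groups.
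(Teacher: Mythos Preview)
Your argument is correct and follows the same route as the paper: induct on $n$, use the short exact sequence $0\to\mathcal{G}\to A^{\mathcal G}\to A^{\mathcal G}/\mathcal{G}\to 0$, and compare the two resulting long exact sequences via the inductive hypothesis together with the vanishing of $\lim^s$ of the $\mathbf{A}_\kappa$-like middle term on both rows. The only cosmetic differences are that you spell out the $n=0$ case directly (the paper cites \cite[Theorem 9]{SHINA}) and phrase the inductive step as an identification of cokernels rather than an invocation of the five lemma; these are the same argument.
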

\begin{proof}
    By induction on $n$. 
    When $n=0$, the conclusion is a ZFC fact (see \cite[Theorem 9]{SHINA}). 
    Now let $1\leq n<\omega$ and fix an $\Om_\kappa$ system $\mathcal{G}$. 
    The short exact sequence of systems
    \[0\to\G\to\mathbf{A}^\mathcal{G}\to\mathbf{A}^\mathcal{G}/\mathcal{G}\to0\]
    as well as similar sequences at each $\alpha$ induces a diagram with exact rows of the form
    \begin{center}
\begin{tikzcd}
\bigoplus_{\alpha<\kappa}\lim^{n-1}{\mathbf{A}^{\mathcal{G}}_\alpha} \arrow[r] \arrow[d] & \bigoplus_{\alpha<\kappa}\lim^{n-1}({\mathbf{A}^{\mathcal{G}}_\alpha/\G_\alpha)} \arrow[r] \arrow[d] & \bigoplus_{\alpha<\kappa}\lim^n\G_\alpha \arrow[r] \arrow[d] & \bigoplus_{\alpha<\kappa}\lim^n\mathbf{A}^{\mathcal{G}}_i \arrow[d] \\
\lim^{n-1}\mathbf{A}^{\mathcal{G}} \arrow[r]                                   & \lim^{n-1}(\mathbf{A}^{\mathcal{G}}/\mathbf{G}) \arrow[r]                                     & \lim^n\mathbf{G} \arrow[r]                                 & \lim^n\mathbf{A}^{\mathcal{G}}                                
\end{tikzcd}
    \end{center}
    
By the inductive hypothesis, the first two vertical maps are isomorphisms and by hypothesis the two rightmost groups are $0$. 
Therefore the desired map is an isomorphism by the five lemma. 
\end{proof}
Our next reduction is from $\mathbf{A}_\kappa$-like systems to systems which are \emph{essentially $\mathbf{A}_\kappa$}. 
\begin{defn}
    An $\mathbf{A}_\kappa$-like system $\mathcal{G}$ is \emph{essentially $\mathbf{A}_\kappa$} if additionally each $H_{\alpha,k}$ is free and nonzero. 
    That is, there are nonzero finitely generated free abelian groups $H_{\alpha,k}$ such that $G_{\alpha,k}\cong\prod_{i\leq k}H_{\alpha,k}$ with maps corresponding to the projection maps. 
\end{defn}
The rationale behind the name choice is the following:
\begin{prop} \label{ess_A_prop}
    Suppose $\mathcal{G}$ is essentially $\mathbf{A}_\kappa$. 
    There is a cofinal $X\subseteq\omega^\kappa$ with an isomorphism of posets $\varphi\colon X\cong\omega^\kappa$ and compatible isomorphisms of abelian groups $(\mathbf{A}_\kappa)_x\cong G_{\varphi(x)}$ for each $x\in X$. 
\end{prop}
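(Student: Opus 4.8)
The plan is to construct the cofinal set $X$, the poset isomorphism $\varphi$, and the group isomorphisms one coordinate at a time, using throughout that each $H_{\alpha,k}$ is a nonzero finitely generated free abelian group. First I would introduce, for $\alpha<\kappa$ and $k<\omega$, the rank $r_{\alpha,k}=\mathrm{rank}(H_{\alpha,k})\geq 1$ and the partial sums $R_\alpha(m)=\sum_{i\leq m}r_{\alpha,i}$, so that $R_\alpha\colon\omega\to\omega$ is strictly increasing with $R_\alpha(0)\geq 1$ and $\mathrm{rank}(G_{\alpha,m})=\mathrm{rank}\bigl(\prod_{i\leq m}H_{\alpha,i}\bigr)=R_\alpha(m)$. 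Setting $S_\alpha=\mathrm{ran}(R_\alpha)$, an infinite subset of $\omega\setminus\{0\}$, I would take $X=\{x\in\omega^\kappa : x(\alpha)\in S_\alpha\text{ for all }\alpha<\kappa\}$; since each $S_\alpha$ is infinite this $X$ is cofinal in $\omega^\kappa$, and $\varphi(x)(\alpha):=R_\alpha^{-1}(x(\alpha))$ is well-defined on $X$. Because each $R_\alpha^{-1}\colon S_\alpha\to\omega$ is an order isomorphism, $\varphi$ is a coordinatewise bijection that both preserves and reflects the product order, hence an isomorphism of posets $X\cong\omega^\kappa$.

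Next I would fix, for each $\alpha,k$, an isomorphism $H_{\alpha,k}\cong\mathbb{Z}^{r_{\alpha,k}}$, and assemble these into isomorphisms $\theta_{\alpha,m}\colon(\mathbf{A}_\kappa)_{\alpha,R_\alpha(m)}=\mathbb{Z}^{R_\alpha(m)}\to\prod_{i\leq m}H_{\alpha,i}=G_{\alpha,m}$ by partitioning the $R_\alpha(m)$ coordinates into consecutive blocks of lengths $r_{\alpha,0},r_{\alpha,1},\dots,r_{\alpha,m}$ in that order and applying the chosen isomorphisms blockwise. The key verification is that these $\theta$'s are compatible with the structure maps: for $m'\geq m$ the map $G_{\alpha,m'}\to G_{\alpha,m}$ forgets exactly the factors $H_{\alpha,i}$ with $m<i\leq m'$, which under the $\theta$'s corresponds to forgetting the last $R_\alpha(m')-R_\alpha(m)$ coordinates of $\mathbb{Z}^{R_\alpha(m')}$, i.e.\ the structure map of $\mathbf{A}_\kappa$ between those two levels. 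Then for $x\in X$ I would set $\psi_x=\bigoplus_{\alpha<\kappa}\theta_{\alpha,\varphi(x)(\alpha)}$, which (using $x(\alpha)=R_\alpha(\varphi(x)(\alpha))$) is an isomorphism $(\mathbf{A}_\kappa)_x=\bigoplus_{\alpha<\kappa}\mathbb{Z}^{x(\alpha)}\to\bigoplus_{\alpha<\kappa}G_{\alpha,\varphi(x)(\alpha)}=G_{\varphi(x)}$; and for $x\leq x'$ in $X$ the square relating $\psi_{x'}$, $\psi_x$, and the two structure maps commutes, since every map in it is the $\kappa$-fold direct sum of the maps in the already-verified commuting squares for the $\theta$'s (applied with $m=\varphi(x)(\alpha)\leq\varphi(x')(\alpha)=m'$). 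Thus $(\psi_x)_{x\in X}$ is the required compatible family of isomorphisms.

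The argument is essentially bookkeeping, and I do not anticipate a genuine obstacle. Two places use the hypotheses on the $H_{\alpha,k}$ and are worth flagging: freeness is what lets us identify $G_{\alpha,m}$ with a single level $\mathbb{Z}^{R_\alpha(m)}$ of $\mathbf{A}_\kappa$, and nonzeroness forces every $r_{\alpha,k}\geq 1$, which is what makes each $R_\alpha$ strictly increasing (so that $\varphi$ is onto all of $\omega^\kappa$, not merely a cofinal subset) and each $S_\alpha$ infinite (so that $X$ is cofinal). The one genuinely load-bearing choice is to concatenate the free blocks in increasing order of the second index; any other ordering would break the correspondence between the projection maps of $\mathbf{A}_\kappa$ and those of $\mathcal{G}$.
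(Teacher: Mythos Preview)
Your proof is correct and follows essentially the same approach as the paper: both define $X$ coordinatewise via the ranks $\operatorname{rk}(G_{\alpha,k})$, use nonzeroness of the $H_{\alpha,k}$ to ensure these ranks are strictly increasing (hence $\varphi$ is a bijection and $X$ is cofinal), and build the group isomorphisms by choosing bases blockwise. You give considerably more detail on the compatibility check for the $\theta_{\alpha,m}$ and the resulting $\psi_x$, which the paper leaves to the reader, but the underlying construction is the same.
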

\begin{proof}
    Let 
    \[X=\{x\in\omega^\kappa\mid \forall \alpha<\kappa\;\exists k<\omega\;(x(\alpha)=\operatorname{rk}(G_{\alpha,k}))\},\]
    where $\operatorname{rk}(G_{\alpha,k})$ is the unique $\ell$ such that $\operatorname{rk}(G_{\alpha,k})\cong\mathbb{Z}^\ell$. 
    Note that since the $H_{\alpha,k}$ are nonzero for any system which is essentially $\mathbf{A}_\kappa$, for each $x\in X$ and $\alpha<\kappa$ there is exactly one such $k$. 
    In particular, the function $\varphi$ defined by $\varphi(x)= (\alpha\mapsto\operatorname{rk}(G_{\alpha,x(\alpha)}))$ defines an order-preserving bijection between $\omega^\kappa$ and $X$. 
    Moreover, for each $\alpha,k$ and $x\in\omega^\omega$, we may readily define compatible isomorphism from $\mathbf{G}_x$ to $(\mathbf{A}_\kappa)_{\varphi(x)}$ on the generators. 
\end{proof}
We now make use of the standard fact that derived limits may be computed along any cofinal suborder (see \cite[Theorem 14.9]{SSH}) to conclude the following.

\begin{cor} \label{ess_A_cor}
    Whenever $\mathcal{G}$ is essentially $\mathbf{A}_\kappa$ and $n<\omega$, $\lim^n\mathbf{G}\cong\lim^n\mathbf{A}_\kappa$.
\end{cor}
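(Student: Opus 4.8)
The plan is to obtain $\lim^n\mathbf{G}\cong\lim^n\mathbf{A}_\kappa$ by chaining together three isomorphisms, using only Proposition \ref{ess_A_prop} together with the already-cited fact (\cite[Theorem 14.9]{SSH}) that derived limits over a directed poset may be computed along any cofinal subposet. Concretely, I would go around the loop
\[\lim{}^n\mathbf{A}_\kappa\;\cong\;\lim{}^n(\mathbf{A}_\kappa\restriction X)\;\cong\;\lim{}^n(\varphi^*\mathbf{G})\;\cong\;\lim{}^n\mathbf{G},\]
where $X$ and $\varphi$ are supplied by Proposition \ref{ess_A_prop} and $\varphi^*\mathbf{G}$ is the system obtained by pulling $\mathbf{G}$ back along $\varphi$.

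In detail, fix the cofinal $X\subseteq\omega^\kappa$, the poset isomorphism $\varphi\colon X\to\omega^\kappa$, and the compatible isomorphisms $(\mathbf{A}_\kappa)_x\cong G_{\varphi(x)}$ for $x\in X$ furnished by Proposition \ref{ess_A_prop}. Let $\varphi^*\mathbf{G}$ be the inverse system indexed by $X$ with $(\varphi^*\mathbf{G})_x=G_{\varphi(x)}$ and the induced structure maps. The compatibility clause says exactly that the given level-wise isomorphisms assemble into an isomorphism of inverse systems $\mathbf{A}_\kappa\restriction X\cong\varphi^*\mathbf{G}$, hence $\lim^s(\mathbf{A}_\kappa\restriction X)\cong\lim^s(\varphi^*\mathbf{G})$ for every $s$. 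Next, since $X$ is cofinal in $\omega^\kappa$, the cited theorem gives $\lim^s(\mathbf{A}_\kappa\restriction X)\cong\lim^s\mathbf{A}_\kappa$; and since $\varphi\colon X\to\omega^\kappa$ is a bijection of posets, relabeling the index set along $\varphi$ identifies $\varphi^*\mathbf{G}$ with $\mathbf{G}$, so $\lim^s(\varphi^*\mathbf{G})\cong\lim^s\mathbf{G}$ (equivalently, $\varphi(X)=\omega^\kappa$ is trivially cofinal in itself). Composing these and taking $s=n$ yields the claim.

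No real obstacle arises here; the corollary is pure bookkeeping given Proposition \ref{ess_A_prop} and the standard invariance of derived limits under cofinal reindexing. The only point deserving a word of care is that the isomorphisms produced in Proposition \ref{ess_A_prop} genuinely commute with the bonding maps — so that they form a morphism of systems rather than an unrelated family of level-wise isomorphisms — but this is precisely what was arranged by defining them on generators in the proof of that proposition, so it may simply be quoted.
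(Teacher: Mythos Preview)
Your proposal is correct and is precisely the argument the paper has in mind: it invokes Proposition~\ref{ess_A_prop} together with the cofinality fact from \cite[Theorem 14.9]{SSH}, and you have simply spelled out the chain of isomorphisms that the paper leaves implicit. The one caveat you flag about compatibility of the level-wise isomorphisms with the bonding maps is exactly right and is handled just as you say.
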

In light of Corollary \ref{ess_A_cor} and Lemma \ref{Ak_like_lem}, to complete the proof of Theorem \ref{main}, we need only prove the following lemma: 
\begin{lem} \label{A_like_ess_A_lem}
    Suppose $1\leq n<\omega$ and whenever $\mathcal{G}$ is essentially $\mathbf{A}_\kappa$, $\lim^n\mathbf{G}=\lim^{n+1}\mathbf{G}=0$. 
    Then whenever $\mathcal{H}$ is $\mathbf{A}_\kappa$-like, $\lim^n\mathbf{H}=0$.
\end{lem}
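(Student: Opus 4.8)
The plan is to resolve an arbitrary $\mathbf{A}_\kappa$-like system by two systems that are \emph{essentially} $\mathbf{A}_\kappa$ and then read off the conclusion from the long exact sequence of derived limits. Fix an $\mathbf{A}_\kappa$-like system $\mathcal{H}$ with defining finitely generated abelian groups $\langle H_{\alpha,k}\mid\alpha<\kappa,\ k<\omega\rangle$. For each pair $(\alpha,k)$ I would fix, via the structure theorem for finitely generated abelian groups, a surjection $\pi_{\alpha,k}\colon F_{\alpha,k}\twoheadrightarrow H_{\alpha,k}$ with $F_{\alpha,k}$ finitely generated free, and then pad it: set $\widehat F_{\alpha,k}=F_{\alpha,k}\oplus\ZZ$, let $\widehat\pi_{\alpha,k}\colon\widehat F_{\alpha,k}\to H_{\alpha,k}$ be $\pi_{\alpha,k}$ composed with the projection onto $F_{\alpha,k}$, and put $\widehat K_{\alpha,k}=\ker\widehat\pi_{\alpha,k}=(\ker\pi_{\alpha,k})\oplus\ZZ$. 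Since a subgroup of a finitely generated free abelian group is again finitely generated and free, both $\widehat F_{\alpha,k}$ and $\widehat K_{\alpha,k}$ are finitely generated, free, and --- thanks to the extra copy of $\ZZ$ --- nonzero. Hence the $\mathbf{A}_\kappa$-like systems $\widehat{\mathcal F}$ and $\widehat{\mathcal K}$ assembled from the families $\langle\widehat F_{\alpha,k}\rangle$ and $\langle\widehat K_{\alpha,k}\rangle$ are both essentially $\mathbf{A}_\kappa$.

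Next I would observe that an $\mathbf{A}_\kappa$-like system is obtained from its defining family by forming finite products $\prod_{i\le k}(-)$ and then the direct sum $\bigoplus_{\alpha<\kappa}(-)$, and that both operations are exact on abelian groups and natural in the defining pieces. Therefore the short exact sequences $0\to\widehat K_{\alpha,k}\to\widehat F_{\alpha,k}\xrightarrow{\widehat\pi_{\alpha,k}}H_{\alpha,k}\to 0$ glue, compatibly with all the projection structure maps, into a short exact sequence of $\omega^\kappa$-indexed systems
\[
0\longrightarrow\widehat{\mathbf K}\longrightarrow\widehat{\mathbf F}\longrightarrow\mathbf H\longrightarrow 0 .
\]
Its long exact sequence of derived limits contains $\lim^n\widehat{\mathbf F}\to\lim^n\mathbf H\to\lim^{n+1}\widehat{\mathbf K}$. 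By the hypothesis of the lemma, $\lim^n\widehat{\mathbf F}=0$ (as $\widehat{\mathcal F}$ is essentially $\mathbf{A}_\kappa$ and $n\ge 1$) and $\lim^{n+1}\widehat{\mathbf K}=0$ (as $\widehat{\mathcal K}$ is essentially $\mathbf{A}_\kappa$ and $n+1\ge 1$), so $\lim^n\mathbf H=0$.

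I expect the argument to be short; the points that will need to be handled carefully are: (i) the padding by $\ZZ$, which is precisely what secures the ``nonzero'' half of the definition of essentially $\mathbf{A}_\kappa$ even when some $H_{\alpha,k}$ vanishes, or is already free and so makes $\ker\pi_{\alpha,k}=0$; (ii) checking that the piecewise short exact sequences really do assemble into a short exact sequence of $\omega^\kappa$-systems, which comes down to exactness of finite products and of direct sums of abelian groups together with naturality in the defining pieces --- this is the step most likely to require care; and (iii) the fact that the hypothesis must be used at \emph{both} degrees $n$ and $n+1$ --- at $n$ for the term $\lim^n\widehat{\mathbf F}$ and at $n+1$ for $\lim^{n+1}\widehat{\mathbf K}$ --- which is exactly why the lemma is phrased with vanishing assumed one degree beyond the conclusion. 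I do not anticipate any obstacle beyond this bookkeeping.
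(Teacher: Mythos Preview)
Your proposal is correct and follows essentially the same approach as the paper: build a short exact sequence $0\to\mathcal{K}\to\mathcal{F}\to\mathcal{H}\to 0$ with $\mathcal{K}$ and $\mathcal{F}$ essentially $\mathbf{A}_\kappa$, then read off $\lim^n\mathbf{H}=0$ from the long exact sequence using the hypothesis at degrees $n$ and $n+1$. The paper simply asserts one can choose each $F_{\alpha,k}$ nonzero free with $\psi_{\alpha,k}$ surjective of nonzero kernel, whereas you make this explicit via the $\oplus\,\ZZ$ padding; otherwise the arguments are identical.
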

\begin{proof}
    
    The key claim is the following; note that a map of inverse systems if epic if and only if every component is a surjection:
    \begin{claim}
        Suppose that $\mathcal{G}$ is an $\mathbf{A}_\kappa$-like system. 
    There is an essentially $\mathbf{A}_\kappa$ system $\mathcal{F}$ and an epic $\varphi\colon\mathcal{F}\to\mathcal{G}$ such that $\ker(\varphi)$ is also essentially $\mathbf{A}_\kappa$.  
    \end{claim}
    \begin{proof}
        
    Let $\mathcal{G}$ be induced by the groups $\langle H_{\alpha,k}\mid \alpha<\kappa,k<\omega\rangle$. 
    For each $\alpha,k$, let $F_{\alpha,k},\psi_{n,k}$ be such that
    \begin{itemize}
        \item $F_{\alpha,k}$ is a finitely generated nonzero free abelian group.
        \item $\psi_{\alpha,k}\colon F_{\alpha,k}\to H_{\alpha,k}$ is surjective with a nonzero kernel.
    \end{itemize}
    Then let $\mathcal{F}$ be induced by the $F_{n,k}$ and let $\varphi_{\alpha,k}\colon\prod_{i\leq k}F_{\alpha,k}\to \prod_{i\leq k}H_{\alpha,k}$ be $\prod_{i\leq k}\psi_{\alpha,i}$.
    Then $\mathcal{F}$ is an essentially $\mathbf{A}_\kappa$ system. 
    Moreover, $\ker(\varphi)$ is the $\mathbf{A}_\kappa$-like system induced by $\ker(\psi_{n,k})$ and therefore essentially $\mathbf{A}_\kappa$ since a subgroup of a free group is free.
    \end{proof}
    With the claim in hand, the proof of Lemma \ref{A_like_ess_A_lem} follows quickly. 
    Given $\mathcal{G}$ which is $\mathbf{A}_\kappa$-like, fix essentially $\mathbf{A}_\kappa$ systems $\mathcal{F},\mathcal{F}'$ and a short exact sequence
    \begin{center}
        \begin{tikzcd}
            0\arrow[r]&\mathcal{F}\arrow[r]&\mathcal{F}'\arrow[r]&\mathcal{G}\arrow[r]&0.
        \end{tikzcd}
    \end{center}
    The corresponding long exact sequence of derived limits yields a sequence
    \begin{center}
        \begin{tikzcd}
            \lim^n\mathbf{F}'\arrow[r]&\lim^{n}\G\arrow[r]&\lim^{n+1}\mathbf{F}.
        \end{tikzcd}
    \end{center}
    By hypothesis, the first and last groups are $0$ so $\lim^n\mathbf{G}=0$ by exactness.
\end{proof}
\section{Questions}
We now conclude with some questions that remain open. 
We first ask whether the hypotheses can all be obtained simultaneously:
\begin{question} \label{main_quest}
    Is it consistent that for every cardinal $\kappa$ and every $1\leq n<\omega$, $\lim^n\mathbf{A}_\kappa=0$?
    Equivalently, is it consistent that strong homology is additive and has compact supports on the class of locally compact metric spaces?
\end{question}
We recall that by \cite[Theorem 1.2]{ADLCM} that for any cardinal $\kappa$, there is a forcing extension in which $\lim^n\mathbf{A}_\kappa=0$ for all $1\leq n<\omega$, but this forcing adds many reals. 
A specific instance of Question \ref{main_quest} of interest is the following:
\begin{question}
    Is it consistent that $\lim^2\mathbf{A}_{2^{\aleph_0}}=0$?
\end{question}
We note here that the proof of Theorem \ref{main} generalizes to show that the vanishing of $\lim^n\mathbf{A}_\kappa[\bigoplus_{i<\lambda}\mathbb{Z}]$ for every cardinal $\lambda$ and $n<\omega$ implies the additivity of derived limits for \emph{all} $\Om_\kappa$ systems. 
In this light, a strengthening of Question \ref{main_quest} is the following:
\begin{question}
    Is it consistent that for every cardinal $\kappa$, derived limits are additive for all $\Om_\kappa$ systems?
\end{question}
In both models where we know the derived limits of $\mathbf{A}$ simultaneously vanish, the same holds for the systems $\mathbf{A}[H]$ for any abelian group $H$ (see \cite[Theorem 7.7]{DAHDL} and \cite[Theorem 1.2]{ADLCM}). 
The following seems natural to ask:
\begin{question} \label{A0_implies_AH0_quest}
    Does $\lim^n\mathbf{A}=0$ for all $1\leq n<\omega$ imply that $\lim^n\mathbf{A}[H]=0$ for all $1\leq n<\omega$ and all abelian groups $H$?
\end{question}
One major open question in the theory of $\lim^n\mathbf{A}$ is the following. 
The smallest known upper bound is $\aleph_{\omega+1}$, obtained by Bergfalk, Hru\v{s}\'ak, and Lambie-Hanson in \cite{SVHDLWLC}. 
In light of \cite[Theorem A(1)]{SNVHDL}, a positive answer to Question \ref{A0_implies_AH0_quest} would yield that $\aleph_{\omega+1}$ is optimal. 
\begin{question}
    What is the least value of $2^{\aleph_0}$ compatible with the assertion that $\lim^n\mathbf{A}=0$ for every $1\leq n<\omega$?
\end{question}

\end{document}